\newtheorem{theorem}{Theorem}[section]
\newtheorem{corollary}[theorem]{Corollary}
\newtheorem{lemma}[theorem]{Lemma}
\newtheorem{proposition}[theorem]{Proposition}
\theoremstyle{definition}
\numberwithin{equation}{section}
\newcommand{\m}{\textsf{m}}
\newcommand{\n}{\textsf{n}}
\newcommand{\nse}{\textsf{nse}}
\newcommand{\Aut}{\textsf{Aut}}
\newcommand{\GF}{\textsf{GF}}
\newcommand{\F}{\mathbb{F}}
\newcommand{\Out}{\textsf{Out}}
\newcommand{\N}{\textbf{N}}
\newcommand{\C}{\textbf{C}}
\newcommand{\Sz}{\mathrm{Sz}}
\renewcommand{\varphi}{\phi}
\begin{document}

\title[A characterization of Suzuki groups]{Finite groups of the same type as Suzuki groups}

\author[S.H. Alavi]{Seyed Hassan Alavi}
\address{S.H. Alavi, Department of Mathematics, Faculty of Science, Bu-Ali Sina University, Hamedan, Iran}
\email{alavi.s.hassan@gmail.com (preferred)}
\email{alavi.s.hassan@basu.ac.ir}

\author[A. Daneshkhah]{Ashraf Daneshkhah$^*$}
\thanks{Corresponding author: A. Daneshkhah}
\address{A. Daneshkhah, Department of Mathematics, Faculty of Science, Bu-Ali Sina University, Hamedan, Iran}
\email{daneshkhah.ashraf@gmail.com (preferred)}
\email{adanesh@basu.ac.ir}

\author[H. Parvizi Mosaed]{Hosein Parvizi Mosaed}
\address{H. Parvizi Mosaed, Alvand Institute of Higher Education, Hamedan, Iran}
\email{h.parvizi.mosaed@gmail.com}

\subjclass[2010]{Primary 20D60; Secondary 20D06.}
\keywords{Thompson's problem, Element order, Suzuki group}

\maketitle%

\begin{abstract}
  For a finite group $G$ and a positive integer $n$, let $G(n)$ be the set of all elements in $G$ such that $x^{n}=1$. The groups $G$ and $H$ are said to be of the same (order) type if $G(n)=H(n)$, for all $n$. The main aim of this paper is to show that if $G$ is a finite group of the same type as Suzuki groups $\Sz(q)$, where $q=2^{2m+1}\geq 8$, then $G$ is isomorphic to $\Sz(q)$. This addresses the well-known J. G. Thompson's problem (1987) for simple groups.
\end{abstract}

\section{Introduction}

For a finite group $G$ and a positive integer $n$, let $G(n)$ consist of all elements $x$ satisfying $x^{n} = 1$. The type of $G$ is defined to be the function whose value at $n$ is the order of $G(n)$. In 1987, J. G. Thompson \cite[Problem 12.37]{book:khukh} possed a problem whether it is true that a group is solvable if its type is the same as that of a solvable one? 
This problem links to the set $\nse(G)$ of \emph{the number of elements of the same order} in $G$. Indeed, it turns out that if two groups $G$ and $H$ are of the same type, then $\nse(G)=\nse(H)$ and $|G|=|H|$. Therefore, if a group $G$ has been uniquely determined by its order and $\nse(G)$, then Thompson's problem is true for $G$. One may ask this problem for non-solvable groups, in particular, finite simple groups. In this direction, Shao et al \cite{art:Shao} studied finite simple groups whose order is divisible by at most four primes. Following this investigation, such problem has been studied for some families of simple groups \cite{art:ADP-Sz} including small Ree groups. In this paper, we prove that
\begin{theorem}\label{thm:main}
Let $G$ be a group with $\nse(G)=\nse(\Sz(q))$ and $|G|=|\Sz(q)|$. Then $G$ is isomorphic to $\Sz(q)$.
\end{theorem}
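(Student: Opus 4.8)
The plan is to recover $G$ in two stages: first use the numerical data $\nse(G)=\nse(\Sz(q))$ and $|G|=|\Sz(q)|$ to force $G$ to have exactly the element orders of $\Sz(q)$, and then apply the structure theory of finite groups with disconnected prime graph to identify $G$ itself. Throughout write $S=\Sz(q)$ and $t=\sqrt{2q}=2^{m+1}$, so that $q^2+1=(q-t+1)(q+t+1)$ and $|S|=q^2(q-1)(q-t+1)(q+t+1)$, the four displayed factors being pairwise coprime and none except $q^2$ even. Recall that a Sylow $2$-subgroup of $S$ has order $q^2$ and exponent $4$, that $\omega(S)=\{1,2,4\}\cup\{d:d\mid q-1\}\cup\{d:d\mid q-t+1\}\cup\{d:d\mid q+t+1\}$, and that the prime graph $\Gamma(S)$ has the four connected components $\{2\}$, $\pi(q-1)$, $\pi(q-t+1)$, $\pi(q+t+1)$. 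For $n\ge 1$ let $m_n$ denote the number of elements of order $n$ in $G$. The elementary tools are: $\phi(n)\mid m_n$ for $n\in\omega(G)$; the Frobenius congruence $n\mid\sum_{d\mid n}m_d$ whenever $n\mid|G|$ (hence $m_p\equiv -1\pmod p$ and $p-1\mid m_p$ for each prime $p\in\pi(G)=\pi(S)$); and the identity $m_n=\phi(n)\cdot(\text{number of cyclic subgroups of order }n)$. Finally, $\nse(S)$ is an explicit finite set — its members being $1$, $(q^2+1)(q-1)$, $q(q-1)(q^2+1)$, and numbers of the shapes $\phi(d)q^2(q^2+1)/2$, $\phi(d)q^2(q-1)(q+t+1)/4$, $\phi(d)q^2(q-1)(q-t+1)/4$ — every $m_n$ belongs to it, and $\sum_n m_n=|S|$.

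First I would determine the $2$-structure of $G$. Since $m_2\equiv 1\pmod 2$, the value $m_2$ is odd, and inspecting $\nse(S)$ shows its only odd members are $1$ and $(q^2+1)(q-1)$; a short separate argument (a single central involution would force a cyclic or generalised quaternion Sylow $2$-subgroup of order $q^2$, leading through Burnside's normal $2$-complement theorem or Brauer--Suzuki to a contradiction with $\nse(S)$) rules out $m_2=1$, so $m_2=(q^2+1)(q-1)$. Counting involutions then pins the order of a Sylow $2$-subgroup $T$ to be $q^2$. Next, using that $m_{2^k}\in\nse(S)$ for every $k$ together with the bound $\sum_{k\ge 1} m_{2^k}\le n_2(|T|-1)$ coming from the number $n_2$ of Sylow $2$-subgroups, I would exclude elements of order $8$ (and higher $2$-powers), conclude that $T$ has exponent $4$, and recover $m_4=q(q-1)(q^2+1)$, exactly as in $S$.

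The core of the argument is to transfer this to the odd part. For each prime $p$ in one of the three pairwise-coprime families $\pi(q-1)$, $\pi(q-t+1)$, $\pi(q+t+1)$, the constraints $m_p\equiv -1\pmod p$, $p-1\mid m_p$, $m_p\in\nse(S)$ should leave only the Suzuki value; one then propagates to prime powers and to mixed orders $pp'$, ruling out any order not already in $\omega(S)$ — such as $2p$, or $pp'$ with $p,p'$ in different families — by a combination of the membership $m_n\in\nse(S)$ and the global bound $\sum_n m_n=|S|$, while simultaneously showing that every divisor of $q-1$, $q-t+1$ and $q+t+1$ does occur. I expect this to be the main obstacle: it is a somewhat long number-theoretic case analysis, delicate because $\nse$ is only known as a set (so $m_n$ could a priori equal the $\nse$-value belonging to an element of a different order), and it is precisely here that the pairwise coprimality of the three factors and the isolation of $\{2\}$ are essential. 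The output of this stage is $\omega(G)=\omega(S)$.

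Finally, with $\omega(G)=\omega(S)$ the prime graph $\Gamma(G)=\Gamma(S)$ has four connected components, so $G$ is nonsolvable (a solvable group has at most two). By the Gruenberg--Kegel theorem on groups with disconnected prime graph, $G$ is neither Frobenius nor $2$-Frobenius, so it has a normal series $1\trianglelefteq N\trianglelefteq L\trianglelefteq G$ with $N$ a nilpotent $\pi_1$-group — here $\pi_1=\{2\}$, so $N$ is a $2$-group — with $L/N$ a nonabelian simple group whose prime graph still contains the odd components $\pi(q-1)$, $\pi(q-t+1)$, $\pi(q+t+1)$, and with $G/L\hookrightarrow\Out(L/N)$. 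Thus $L/N$ is a simple group with at least three prime graph components whose order divides $|S|$; running through Williams' list of such simple groups and comparing orders forces $L/N\cong\Sz(q)$, and then $|L/N|=|S|=|G|$ forces $N=1$ and $G/L=1$, that is, $G\cong\Sz(q)$. (Alternatively, once $\omega(G)=\omega(\Sz(q))$ is established one may simply invoke the recognizability of the Suzuki groups by their spectrum; I would keep the explicit identification in order to remain self-contained.)
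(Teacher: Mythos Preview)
Your endgame matches the paper's, but the paper reaches it by a much shorter road that entirely sidesteps what you yourself flag as the main obstacle. The paper never establishes $\omega(G)=\omega(S)$; it proves only that $\{2\}$ is an isolated vertex of $\Gamma(G)$, via Weisner's theorem: the number $f(2)$ of elements of $G$ of even order must be a multiple of the largest odd divisor $(q-1)(q^2+1)$ of $|G|$, and since every $\m_i(S)$ with $i\neq 1,2,4$ is divisible by $q^2$, a short divisibility-versus-size argument forces $2p\notin\omega(G)$ for every odd prime $p$. With only $t(G)\geq 2$ in hand the paper must then explicitly exclude the Frobenius and $2$-Frobenius cases by elementary order arithmetic before invoking Williams' Theorem~A --- your four-component conclusion would make that step automatic, which is what all your extra work would buy. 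Finally, the paper identifies the simple section $K/H$ not by consulting Williams' tables but from the single observation $3\nmid|S|$: Suzuki groups are the only nonabelian simple groups of order coprime to $3$, so $K/H\cong\Sz(q')$ immediately, and comparing the $2$-part and the odd part of $|G|$ forces $q'=q$ and hence $G\cong S$. Two caveats on your route as written: the inequality $\sum_{k\geq 1}\m_{2^k}\le n_2(|T|-1)$ does not by itself exclude elements of order $8$ without independent control of $n_2$, and the congruence constraints $\m_p\equiv -1\pmod p$, $(p-1)\mid\m_p$ need not single out a unique member of $\nse(S)$; these are precisely the places where your case analysis would require genuine work, and they are what the paper's Weisner-based shortcut avoids.
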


As noted above, as an immediate consequence of Theorem~\ref{thm:main}, we have that

\begin{corollary}
If $G$ is a finite group of the same type as $\Sz(q)$, then $G$ is isomorphic to $\Sz(q)$.
\end{corollary}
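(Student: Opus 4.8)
The plan is to recover as much of the local and global structure of $G$ as possible from the equalities $\nse(G)=\nse(\Sz(q))$ and $|G|=|\Sz(q)|$, then to reconstruct the prime graph of $G$ and feed it into the Gruenberg--Kegel theorem. Throughout write $q=2^{2m+1}$ and $r=2^{m+1}$, so that $|\Sz(q)|=q^{2}(q-1)(q^{2}+1)$ with $q^{2}+1=(q-r+1)(q+r+1)$ and the three numbers $q-1,\ q-r+1,\ q+r+1$ pairwise coprime; recall also that the element orders of $\Sz(q)$ are exactly the divisors of $4$, of $q-1$, of $q-r+1$ and of $q+r+1$, and that the prime graph $\Gamma(\Sz(q))$ has the four connected components $\{2\}$, $\pi(q-1)$, $\pi(q-r+1)$ and $\pi(q+r+1)$. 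For a finite group $X$ and an integer $k$, let $m_{k}(X)$ denote the number of elements of order $k$ in $X$; the hypothesis is that $|G|=|\Sz(q)|$ and that the multiset of the $m_{k}(G)$ equals that of the $m_{k}(\Sz(q))$.

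The first ingredients are the standard arithmetic constraints imposed by $\nse$: $m_{1}(G)=1$; for $k\geq 3$ the inversion map is a fixed-point-free involution on the set of elements of order $k$, so $m_{k}(G)$ is even, and hence the only odd values occurring in $\nse(\Sz(q))$ are $1$ and $m_{2}(\Sz(q))=(q-1)(q^{2}+1)$; by Frobenius's theorem $n\mid |G(n)|$ for every $n\mid|G|$, so $m_{p}(G)\equiv-1\pmod{p}$ for each prime $p$, while $\phi(k)\mid m_{k}(G)$ always. Using these together with the explicit list $\nse(\Sz(q))$, I would first pin down the $2$-local data of $G$: that $m_{2}(G)=(q-1)(q^{2}+1)$ (the alternative $m_{2}(G)=1$ forces a central involution and is excluded by examining $m_{4}(G)$ and the partial sum $\sum_{d\mid 4}m_{d}(G)$), that $m_{4}(G)=q(q-1)(q^{2}+1)$ and hence $|G(4)|=q^{4}$, and consequently that a Sylow $2$-subgroup $P$ of $G$ has order $q^{2}$ and exponent $4$ and that distinct Sylow $2$-subgroups of $G$ intersect trivially.

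The core of the argument, and the step I expect to be the main obstacle, is to show $\Gamma(G)=\Gamma(\Sz(q))$: that $G$ has no element of order $2p$ for any odd prime $p$ (so $2$ is isolated in $\Gamma(G)$), and that $G$ has no element of order $p\ell$ whenever $p,\ell$ are odd primes lying in two distinct ones of the sets $\pi(q-1)$, $\pi(q-r+1)$, $\pi(q+r+1)$. Each such non-adjacency I would prove by contradiction: assuming the relevant element order exists, one applies the divisibility constraints above to $m_{p}(G)$ and $m_{2p}(G)$ (resp.\ $m_{p\ell}(G)$) and to suitable partial sums $\sum_{d\mid n}m_{d}(G)$, then matches the finitely many possibilities against the values actually present in $\nse(\Sz(q))$ until a numerical contradiction appears, typically with $\sum_{k}m_{k}(G)=|G|$ or with a congruence modulo $p$. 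The same sort of counting also yields that every Sylow subgroup of $G$ at an odd prime is cyclic. This analysis is elementary but delicate, and the number of subcases grows with the numbers of prime divisors of $q-1$, $q-r+1$ and $q+r+1$, so the three cyclic ``blocks'' must be treated uniformly to keep it finite and tractable.

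Granting $\Gamma(G)=\Gamma(\Sz(q))$, the prime graph of $G$ is disconnected, so by the Gruenberg--Kegel theorem $G$ is a Frobenius group, a $2$-Frobenius group, or there is a normal series $1\trianglelefteq H\trianglelefteq K\trianglelefteq G$ in which $H$ and $G/K$ are $2$-groups (the component of $2$ being $\{2\}$) and $K/H$ is a nonabelian simple group whose prime graph contains $\pi(q-1)$, $\pi(q-r+1)$ and $\pi(q+r+1)$ as components. A $2$-Frobenius group has exactly two prime-graph components whereas $\Gamma(G)$ has four, so that case is impossible. If $G$ were Frobenius with kernel $N$ and complement $C$, then $N$ would be a nilpotent Hall subgroup, so $\pi(N)$ would lie in a single component of $\Gamma(G)$ and $|N|$ would be the corresponding part of $|G|$, hence $|N|\leq q^{2}$; but $C\neq 1$ (else $G$ is nilpotent, contradicting four components) and $|C|\mid|N|-1$ forces $|N|>\sqrt{|G|}$, while a short estimate gives $q^{2}<\sqrt{|\Sz(q)|}$ for $q\geq 8$, a contradiction. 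So the third case holds. Then the odd part of $|K/H|$ equals the odd part of $|\Sz(q)|$, namely $(q-1)(q^{2}+1)$; since $3\nmid|\Sz(q)|$, the order of $K/H$ is prime to $3$, so by the classification of finite simple groups $K/H\cong\Sz(2^{2k+1})$ for some $k\geq 1$, and comparing odd parts (the map $t\mapsto(t-1)(t^{2}+1)$ being injective) gives $2^{2k+1}=q$, i.e.\ $K/H\cong\Sz(q)$. Finally $|K/H|=|\Sz(q)|=|G|$ forces $H=G/K=1$, so $G\cong\Sz(q)$, as required.
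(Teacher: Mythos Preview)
Your overall architecture matches the paper's: recover enough of $\Gamma(G)$ to invoke Williams' theorem, eliminate the Frobenius and $2$-Frobenius alternatives, then pin down the simple section via $3\nmid|G|$ and compare orders. The genuine difference is in the middle step. The paper does \emph{not} try to establish $\Gamma(G)=\Gamma(\Sz(q))$; it proves only that $2$ is isolated, and the tool is not the ad~hoc congruence bookkeeping you sketch but Weisner's theorem: the number $f(t)$ of elements of $G$ whose order is a multiple of $t$ is divisible by the largest divisor of $|G|$ coprime to $t$. With $t=2$ this gives $f(2)=(q-1)(q^{2}+1)r$ for some odd $r$; combined with the key observation that $q^{2}\mid \m_{i}(S)$ for every $i\neq 1,2,4$ (a Sylow $2$-subgroup of $S$ acts fixed-point-freely by conjugation on the elements of odd order in $S$), a short two-case analysis then rules out $2p\in\omega(G)$ for every odd prime $p$. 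Because the paper only has $t(G)\geq 2$, it must eliminate the Frobenius and $2$-Frobenius cases by direct order-divisibility arguments (one is forced into impossibilities such as $(q^{2}+1)(q-1)\mid q^{2}-1$ or $(q^{2}+1)(q-1)\mid 2^{\alpha}-1$). Your route trades this for a cleaner endgame---$t(G)=4\neq 2$ would kill both alternatives at once---but the price is the full prime-graph reconstruction, which you yourself flag as the main obstacle and only sketch; it is not clear your congruence arguments can be made uniform as the number of prime divisors of $q-1$ and $q\pm r+1$ grows, and you are missing precisely the Weisner-type divisibility that makes the paper's argument finite. Incidentally, once you do have $t(G)=4$, your separate size estimate for the Frobenius case is unnecessary: Frobenius groups of even order also have exactly two prime-graph components.
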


In order to prove Theorem~\ref{thm:main}, we use a partition of Suzuki groups $S:=\Sz(q)$, where $q=2^{2m+1}\geq 8$ (see Lemma~\ref{lem:part}), that is to say, a set of subgroups $H_{i}$ of $S$, for $i=1,\ldots,s$, such that each nontrivial element of $S$ belongs to exactly one subgroup $H_{i}$. We use this information to determine the set $\nse(S)$ in Proposition~\ref{prop:nse} and to prove that $2$ is an isolated vertex in the prime graph of a group $G$ satisfying hypotheses of Theorem~\ref{thm:main}, see Proposition~\ref{prop:isolated}. Then we show that $G$ is neither Frobenius, nor $2$-Frobenius group. Finally, we obtain a section of $G$ which is isomorphic to $S$ and  prove that $G$ is isomorphic to $S$.

Finally, some brief comments on the notation used in this paper. Throughout this article all groups are finite. Our group-theoretic notation is standard, and it is consistent with the notation in \cite{book:Car,book:atlas,book:Gor}. We denote a Sylow $p$-subgroup of $G$ by $G_p$. We also use $\n_p(G)$ to  denote the number of Sylow $p$-subgroups of $G$. For a positive integer $n$, the set of prime divisors of $n$ is denoted by $\pi(n)$, and if $G$ is a finite group, $\pi(G):=\pi(|G|)$, where $|G|$ is the order of $G$. We denote the set of elements' orders of $G$ by $\omega(G)$ known as \emph{spectrum} of $G$. The \emph{prime graph} $\Gamma(G)$ of a finite group $G$ is a graph whose vertex set is $\pi(G)$, and two vertices $p$ and $q$ are adjacent if and only if $pq\in\omega(G)$. Assume further that $\Gamma(G)$ has $t(G)$ connected components $\pi_i$, for $i=1,2,\hdots,t(G)$. The positive integers $n_{i}$ with $\pi(n_{i})=\pi_{i}$ are called order components of $G$. In the case where $G$ is of even order, we always assume that $2\in\pi_1$, and $\pi_{1}$ is said to be the even component of $G$. In this way, $\pi_{i}$ and $n_{i}$ are called odd components and odd order components of $G$, respectively. Recall that $\nse(G)$ is the set of the number of elements in $G$ with the same order. In other word, $\nse(G)$ consists of the numbers $\m_i(G)$ of elements of order $i$ in $G$, for $i\in \omega(G)$. Here,  $\varphi$ is the \emph{Euler totient} function.

\section{Preliminaries}\label{sec:}

In this section, we introduce the some known results which will be used in the proof of the main result.

\begin{lemma}\cite[Theorem 9.1.2]{book:Hall}\label{27}
Let $G$ be a finite group, and let $n$ be a positive integer dividing $|G|$. Then $n$ divides $|G(n)|$.
\end{lemma}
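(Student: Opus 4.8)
This is Frobenius's 1895 theorem (hence the reference to Hall's book), so rather than invent something new I would reconstruct its classical proof. Write $\theta_G(n):=|G(n)|$. The plan is a double induction, on $|G|$ and, for fixed $G$, on $n\mid|G|$, resting on two essentially free anchors. If $G$ is abelian, then $G\cong\prod_j C_{d_j}$ gives $\theta_G(n)=\prod_j\gcd(n,d_j)$, and comparing exponents prime by prime — $\sum_j v_p(d_j)=v_p(|G|)\ge v_p(n)$ forces $v_p\bigl(\prod_j\gcd(n,d_j)\bigr)\ge v_p(n)$ — shows $n\mid\theta_G(n)$. And $n=p$ prime is Cauchy's theorem, which I would get from Wielandt's orbit count: $\mathbb{Z}/p$ acting by cyclic rotation on $\{(x_1,\dots,x_p)\in G^p: x_1\cdots x_p=1\}$ has $\theta_G(p)\equiv|G|^{p-1}\equiv 0\pmod p$ fixed points.

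\emph{Reduction to prime powers.} Suppose $n=n_1n_2$ with $\gcd(n_1,n_2)=1$. Each $x\in G(n)$ factors uniquely as $x=yz$ into its commuting $n_1$-part $y\in G(n_1)$ and $n_2$-part $z\in G(n_2)$ (both powers of $x$), and every such commuting pair arises this way, so $\theta_G(n)=\sum_{y\in G(n_1)}|C_G(y)(n_2)|$. Collecting the sum over $G$-conjugacy classes of $y$ rewrites it as $\sum_{\mathcal O}[G:C_G(y_{\mathcal O})]\,\theta_{C_G(y_{\mathcal O})}(n_2)$; the terms with $C_G(y_{\mathcal O})<G$ are handled by the induction on $|G|$ (which bounds $v_p(\theta_{C_G(y_{\mathcal O})}(n_2))$ for $p\mid n_2$), the terms with $y_{\mathcal O}$ central contribute multiples of $\theta_G(n_2)$ and are handled by the induction on $n$, and since $p\nmid n_1$ whenever $p\mid n_2$, any $p$-part absent from a centraliser is restored by its index $[G:C_G(y_{\mathcal O})]$. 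This yields $v_p(\theta_G(n))\ge v_p(n_2)$ for all $p\mid n_2$, hence $n_2\mid\theta_G(n)$, and symmetrically $n_1\mid\theta_G(n)$, so $n\mid\theta_G(n)$. It therefore suffices to treat $n=p^a$.

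\emph{The prime-power case, and the main obstacle.} For $n=p^a$ with $a\ge2$ I would use the power map $\psi\colon G(p^a)\to G(p)$, $\psi(x)=x^{p^{a-1}}$ (well defined since $\psi(x)^p=1$). Its fibre over $1$ is $G(p^{a-1})$, of size divisible by $p^{a-1}$ by the induction on $n$; every other nonempty fibre lies over a nontrivial $y\in G(p)$, is contained in $C_G(y)$, and — using that $x\mapsto x^i$ permutes $G(p^a)$ and carries $\psi^{-1}(y)$ onto $\psi^{-1}(y^i)$ — the fibres over the $p-1$ generators of a fixed $\langle y\rangle$ all have equal size. One must then show that $\theta_G(p^{a-1})$ plus the total size of these remaining fibres is divisible by $p^a$. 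This is the genuinely delicate point: already for $G=C_{p^2}$ the fibre over $1$ has size $p$ and the rest total $p(p-1)$, neither divisible by $p^2$, so the two parts are not individually multiples of $p^a$ — they only conspire to give one — and, worse, $C_G(y)$ need not contain the full $p$-part of $|G|$, so the induction on $|G|$ has to be fed into the centralisers $C_G(\langle y\rangle)<G$ through the identity $\theta_{C_G(y)}(p^a)=\theta_{C_G(y)}(p^{a-1})+\sum_{i=1}^{p-1}|\psi^{-1}(y^i)|$, with the central subgroups $\langle y\rangle\le Z(G)$ absorbed into the induction on $n$ and the abelian anchor. Assembling these contributions correctly, over the $G$-conjugacy orbits of the subgroups $\langle y\rangle$, is the step that needs real care; everything else is bookkeeping.
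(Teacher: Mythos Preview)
The paper does not prove this lemma; it is quoted from Hall's book with a bare citation and then used as a black box. There is thus no in-paper argument to compare your proposal against.

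On your sketch itself: the anchors (the abelian case via $\theta_G(n)=\prod_j\gcd(n,d_j)$, and Cauchy for $n=p$ via Wielandt's orbit count) are correct, and your coprime reduction to the prime-power case is sound. In particular, your handling of the possibility that $n_2\nmid|C_G(y)|$---observing that $\theta_{C_G(y)}(n_2)=\theta_{C_G(y)}(\gcd(n_2,|C_G(y)|))$, so that induction on $|G|$ applies to that gcd while any shortfall in $p$-power is supplied by the index $[G:C_G(y)]$---is exactly the point that makes that step go through, and it is the place where casual write-ups of this reduction often go wrong.

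The prime-power case, however, is not proved. You set up the fibration $\psi\colon G(p^a)\to G(p)$, record some true facts about its fibres (equal sizes over the generators of a fixed $\langle y\rangle$, containment in $C_G(y)$), and then stop at ``assembling these contributions correctly \dots\ is the step that needs real care.'' That assembly \emph{is} Frobenius's theorem in the case $n=p^a$; without carrying it out you have a plan rather than a proof, and your own $C_{p^2}$ example shows why no soft argument will finish it. Since the paper itself offers nothing here, there is no discrepancy to flag---but if you intend this as a self-contained argument, the prime-power step still has to be written out in full.
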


The proof of the following result is straightforward by Lemma \ref{27}. Recall that $\nse(G)=\{\m_{i}(G) \mid i\in \omega(G)\}$.

\begin{lemma}\label{28}
Let $G$ be a finite group. Then for every $i\in\omega(G)$, $\varphi(i)$  divides $\m_i(G)$, and $i$ divides $\sum_{j \mid i} \m_j(G)$. Moreover, if $i>2$, then $\m_i(G)$ is even.
\end{lemma}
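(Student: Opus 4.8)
The plan is to derive all three assertions from the single fact provided by Lemma \ref{27}, applied to suitable cyclic subgroups and suitable divisors. First I would prove that $\varphi(i)$ divides $\m_i(G)$. The key observation is that the set of elements of order exactly $i$ in $G$ is partitioned into cyclic subgroups: if $x$ has order $i$, then $\langle x\rangle$ contains precisely $\varphi(i)$ elements of order $i$, and two such cyclic subgroups are either equal or meet trivially (hence share no element of order $i$). Consequently $\m_i(G)$ is a multiple of $\varphi(i)$, namely $\varphi(i)$ times the number of cyclic subgroups of order $i$ in $G$.

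Next I would establish that $i$ divides $\sum_{j\mid i}\m_j(G)$. Here I would like to invoke Lemma \ref{27} with $n=i$, but that requires $i\mid |G|$; since we only assume $i\in\omega(G)$, we certainly have $i\mid|G|$ (an element of order $i$ generates a cyclic subgroup of order $i$, and Lagrange's theorem applies), so Lemma \ref{27} gives $i\mid|G(i)|$. It remains to identify $|G(i)|$ with $\sum_{j\mid i}\m_j(G)$: an element $x$ satisfies $x^i=1$ if and only if the order of $x$ divides $i$, so $G(i)$ is the disjoint union over the divisors $j$ of $i$ of the sets of elements of order exactly $j$, whence $|G(i)|=\sum_{j\mid i}\m_j(G)$. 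Combining, $i\mid\sum_{j\mid i}\m_j(G)$.

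Finally, for $i>2$ I would argue that $\m_i(G)$ is even. The cleanest route is the map $x\mapsto x^{-1}$ on the set of elements of order $i$; this is an involution of that set, and it has a fixed point exactly when some element of order $i$ equals its own inverse, i.e. when $x^2=1$, which forces $i\le 2$. Hence for $i>2$ the involution $x\mapsto x^{-1}$ is fixed-point-free, so it pairs up the elements of order $i$ and $\m_i(G)$ is even. (Alternatively one notes $\varphi(i)$ is even for $i>2$ and uses the first part, but the pairing argument is self-contained.)

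None of the three steps presents a genuine obstacle; the only point requiring a little care is making explicit that $i\in\omega(G)$ implies $i\mid|G|$ so that Lemma \ref{27} is applicable, and that $G(i)$ decomposes as the stated disjoint union over divisors of $i$. Everything else is elementary counting together with the quoted Lemma \ref{27}.
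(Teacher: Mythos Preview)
Your argument is correct and matches the paper's approach, which simply declares the result ``straightforward by Lemma~\ref{27}''. One small slip in wording: distinct cyclic subgroups of order $i$ need not meet \emph{trivially} (for instance, in $\mathbb{Z}/2\times\mathbb{Z}/4$ the two cyclic subgroups of order $4$ intersect in a subgroup of order $2$); what you actually need---and what your parenthetical correctly isolates---is only that they share no element of order exactly $i$, which follows because any such element generates the whole subgroup.
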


\begin{lemma}[Theorem 3 in \cite{art:weisner}]\label{20}
Let $G$ be a finite group of order $n$. Then the number of elements whose orders are multiples of $t$ is either zero, or a multiple of the greatest divisor of $n$ that is prime to $t$.
\end{lemma}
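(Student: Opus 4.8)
The plan is to prove the statement by inclusion--exclusion combined with the Frobenius-type divisibility recorded in Lemma~\ref{27}. Write $n=|G|$ and, for a prime $p$, let $v_p$ denote the $p$-adic valuation; then the greatest divisor of $n$ that is prime to $t$ is $m:=\prod_{p\nmid t}p^{v_p(n)}$. First I would dispose of the trivial case: if some element of $G$ has order a multiple of $t$, then $t\mid n$, so whenever the count in question is nonzero we may assume $t\mid n$; in particular, writing $t=p_1^{b_1}\cdots p_r^{b_r}$ with the $p_i$ distinct, each exponent satisfies $1\le b_i\le v_{p_i}(n)$.

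The key observation is that, writing $o(g)$ for the order of an element $g\in G$, the integer $o(g)$ fails to be a multiple of $t$ precisely when $v_{p_i}(o(g))\le b_i-1$ for at least one index $i$. Setting $B_i:=\{\,g\in G:\ v_{p_i}(o(g))\le b_i-1\,\}$, the set of elements whose order is not a multiple of $t$ is therefore $B_1\cup\cdots\cup B_r$, so inclusion--exclusion yields that the number $N_t$ of elements whose order \emph{is} a multiple of $t$ equals
\[
N_t \;=\; n-\sum_{\emptyset\neq S\subseteq\{1,\dots,r\}}(-1)^{|S|+1}\Bigl|\,\bigcap_{i\in S}B_i\,\Bigr|.
\]
The next step is to recognise each intersection as one of the sets $G(d)$: for nonempty $S$ one checks directly that $\bigcap_{i\in S}B_i=G(d_S)$, where
\[
d_S\;:=\;\frac{n}{\prod_{i\in S}p_i^{\,v_{p_i}(n)-b_i+1}},
\]
the point being that $0\le b_i-1<v_{p_i}(n)$ makes this a genuine divisor of $n$, while the passage from $n$ to $d_S$ changes only the exponents of primes dividing $t$, so that $m$ divides $d_S$.

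Finally I would invoke Lemma~\ref{27}: since $d_S\mid n$, it divides $|G(d_S)|$, and hence $m$ divides $\bigl|\bigcap_{i\in S}B_i\bigr|$ for every nonempty $S$. As $m$ also divides $n$, the displayed identity forces $m\mid N_t$, which is exactly the assertion (the alternative ``$N_t=0$'' being subsumed by ``$N_t$ is a multiple of $m$''). I do not expect a genuine obstacle here; the only places demanding care are the reduction to the case $t\mid n$ and the verification that $\bigcap_{i\in S}B_i$ coincides with $G(d_S)$, both of which come down to bookkeeping with $p$-adic valuations once one recalls that $o(g)\mid n$ for every $g\in G$.
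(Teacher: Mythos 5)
Your proof is correct. Note that the paper offers no proof of this lemma at all: it is quoted verbatim as Theorem 3 of Weisner's 1925 paper, so there is no internal argument to compare yours against. Your derivation stands on its own: the identification of $\bigcap_{i\in S}B_i$ with $G(d_S)$ is right (since $o(g)\mid n$ for every $g$, the only binding constraints in $o(g)\mid d_S$ are the ones at the primes $p_i$ with $i\in S$), the divisor $d_S$ is a genuine divisor of $n$ that is still divisible by $m$, and Lemma~\ref{27} then makes every term of the inclusion--exclusion sum, as well as $n$ itself, a multiple of $m$, forcing $m\mid N_t$. The reduction to $t\mid n$ correctly absorbs the ``or zero'' alternative. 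This is in fact the classical route --- deducing Weisner's statement from Frobenius's theorem on the number of solutions of $x^d=1$ --- and it is exactly the dependency the paper implicitly sets up by stating Lemma~\ref{27} immediately beforehand, so your argument is a natural and complete filling-in of the cited result.
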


In what follows, recall that $t(G)$ is the number of connected components
of the prime graph $\Gamma(G)$.

\begin{lemma}\cite[Theorem 1]{art:Chen}\label{lem:frob}
Let $G$ be a Frobenius group of even order with kernel $K$ and complement $H$. Then $t(G)=2$, $\pi(H)$ and $\pi(K)$ are vertex sets of the connected components of $\Gamma(G)$.
\end{lemma}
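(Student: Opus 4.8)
The plan is to argue directly from the defining structure of a Frobenius group. Write $G=K\rtimes H$ and recall the standard facts: $\gcd(|K|,|H|)=1$; $K$ is nilpotent by Thompson's theorem; the non-identity elements of $G$ are partitioned by $K\setminus\{1\}$ together with the sets $H^{g}\setminus\{1\}$ for $g\in G$; distinct conjugates of $H$, and any conjugate of $H$ together with $K$, meet trivially; and hence $C_{G}(k)\le K$ for every $1\ne k\in K$. From these one reads off immediately that $\pi(G)=\pi(K)\sqcup\pi(H)$ with both parts nonempty, that any element of $p$-power order with $p\in\pi(K)$ lies in $K$, and that any element whose order has all its prime divisors in $\pi(H)$ lies in some conjugate of $H$ (its order is prime to $|K|$, so it cannot lie in $K$); consequently, for an integer $n$ with $\pi(n)\subseteq\pi(H)$ one has $n\in\omega(G)$ if and only if $n\in\omega(H)$, i.e. the subgraph of $\Gamma(G)$ induced on $\pi(H)$ coincides with $\Gamma(H)$.

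Next I would analyse the edges of $\Gamma(G)$. The set $\pi(K)$ spans a complete subgraph: since $K$ is nilpotent it is the direct product of its Sylow subgroups, so for $p,q\in\pi(K)$ there are commuting elements of orders $p$ and $q$ in $K$, giving an element of order $pq$. There is no edge between $\pi(K)$ and $\pi(H)$: if $g\in G$ had order $pr$ with $p\in\pi(K)$ and $r\in\pi(H)$, the power $u$ of $g$ of order $p$ would lie in $K$, whereas $g$ has order divisible by $r\notin\pi(K)$, so $g\notin K$ and thus $g\in H^{x}$ for some $x$; then $u\in\langle g\rangle\le H^{x}$, forcing $u\in K\cap H^{x}=\{1\}$, a contradiction. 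Hence $\pi(K)$ is a single connected component of $\Gamma(G)$ and $\pi(H)$ is a union of connected components, so it remains only to show that $\Gamma(H)$ is connected.

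For this last point I would use the structure theory of Frobenius complements, which I expect to be the only non-routine ingredient. Since $\gcd(|K|,|H|)=1$ and $|G|$ is even, exactly one of $|K|,|H|$ is even. If $|H|$ is even, then $H$ has a unique involution $z$, which is central in $H$: two involutions of $H$ both invert $K$ (a fixed-point-free automorphism of order two is inversion), so their product centralizes $K$ and is therefore trivial, and the conjugates of $z$ are involutions, hence equal to $z$. Then $z$ commutes with an element of each odd prime order dividing $|H|$, so $2$ is adjacent in $\Gamma(H)$ to every other vertex and $\Gamma(H)$ is connected. If $|H|$ is odd, then all Sylow subgroups of $H$ are cyclic, so $H$ is a $Z$-group and may be written as $A\rtimes B$ with $A,B$ cyclic of coprime orders; for primes $p,q\in\pi(H)$, if both lie in $\pi(A)$, or both in $\pi(B)$, the cyclic group $A$, respectively $B$, contains an element of order $pq$, while if $p\in\pi(A)$ and $q\in\pi(B)$ the unique subgroup of order $p$ of $A$ is normal in $H$ and, together with a subgroup of order $q$, generates a subgroup of $H$ of order exactly $pq$, which — being a subgroup of a Frobenius complement — is cyclic and so has an element of order $pq$. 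In either case $\Gamma(H)$ is connected.

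Putting this together, $\Gamma(G)$ has exactly the two connected components $\pi(K)$ and $\pi(H)$, which yields $t(G)=2$ together with the asserted description of the components. The one genuinely delicate step is the connectedness of $\Gamma(H)$ when $|H|$ is odd, which relies on the classical fact that every subgroup of a Frobenius complement of order a product of two primes is cyclic; the rest is bookkeeping with the partition of a Frobenius group.
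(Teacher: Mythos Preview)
Your argument is correct. The paper itself does not prove this lemma: it is quoted verbatim from Chen's paper \cite{art:Chen} and used as a black box, so there is no ``paper's proof'' to compare against beyond the citation.

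Your route is the standard direct one. A couple of minor remarks. First, the step ruling out edges between $\pi(K)$ and $\pi(H)$ can be shortened: an element $g$ of order $pr$ with $p\in\pi(K)$ and $r\in\pi(H)$ has order dividing neither $|K|$ nor $|H|$, so it cannot lie in $K$ nor in any conjugate of $H$, contradicting the partition of $G\setminus\{1\}$ outright; your detour through the power $u=g^{r}$ is unnecessary (though not wrong). Second, in the even-$|H|$ case your claim that a fixed-point-free automorphism of order~$2$ is inversion silently forces $K$ to be abelian; this is true and classical, but it is worth being aware that you are using it. Third, the genuinely substantive input, as you correctly flag, is the $pq$-cyclicity of subgroups of Frobenius complements (equivalently, that odd-order complements are $Z$-groups); everything else is indeed bookkeeping.
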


A group $G$ is called $2$-Frobenius if there exists a normal series $1\unlhd H\unlhd K\unlhd G$ such that $G/H$ and $K$ are Frobenius groups with kernel $K/H$ and $H$ respectively.

\begin{lemma}\cite[Theorem 2]{art:Chen}\label{lem:2-frob}
Let $G$ be a $2$-Frobenius group of even order. Then $t(G)=2$, $\pi(G/K)\cup\pi(H)=\pi_1$,  $\pi(K/H)=\pi_2$, and $G/K$ and $K/H$ are cyclic groups and $|G/K|$ divides $|\Aut(K/H)|$.
\end{lemma}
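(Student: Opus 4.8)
The plan is to read off the global structure of $G$ from the two Frobenius structures, and then analyse the prime graph $\Gamma(G)$ directly. Write $K=H\rtimes C$ with $C\cong K/H$ a Frobenius complement of $K$. First I would record the structural consequences. Since Frobenius kernels are nilpotent (Thompson), both $H$ and $K/H$ are nilpotent, and the two Frobenius structures give $\gcd(|H|,|K/H|)=1=\gcd(|K/H|,|G/K|)$; hence $\pi(K/H)$ is disjoint from $\pi(H)\cup\pi(G/K)$, and $\pi(G)=\pi(K/H)\sqcup(\pi(H)\cup\pi(G/K))$. Now $K/H$ is simultaneously nilpotent and a Frobenius complement, so each of its Sylow subgroups is cyclic or, for the prime $2$, generalized quaternion. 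If $2\mid|K/H|$, the Sylow $2$-subgroup of $K/H$ has a unique subgroup of order $2$, which is then characteristic in $K/H$ and hence normal in $G/H$; but $G/K$ acts fixed-point-freely on the kernel $K/H$, so it acts fixed-point-freely on this subgroup of order $2$, forcing $G/K=1$, a contradiction. Thus $|K/H|$ is odd, all its Sylow subgroups are cyclic, so $K/H$ is cyclic, and $2\in\pi(H)\cup\pi(G/K)$. Finally $G/K$ acts faithfully (because fixed-point-freely) on the cyclic group $K/H$, so $G/K\hookrightarrow\Aut(K/H)$ with $\Aut(K/H)$ abelian; hence $G/K$ is an abelian Frobenius complement, therefore cyclic, and $|G/K|$ divides $|\Aut(K/H)|$.

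Next I would show that each of the two vertex sets $\pi(K/H)$ and $\pi(H)\cup\pi(G/K)$ induces a connected subgraph of $\Gamma(G)$. Since $K/H$ is cyclic, lifting a generator to $K$ and taking a suitable power shows that for any $p,q\in\pi(K/H)$ the group $G$ has an element of order $pq$; so $\pi(K/H)$ induces a complete subgraph. Likewise $G/K$ is cyclic, so $\pi(G/K)$ induces a complete subgraph, and $H$ is nilpotent, so $\pi(H)$ induces a complete subgraph; thus if $\pi(H)\cap\pi(G/K)\neq\varnothing$ the union $\pi(H)\cup\pi(G/K)$ is connected. The main obstacle is the remaining case $\pi(H)\cap\pi(G/K)=\varnothing$: here I must produce at least one edge between $\pi(H)$ and $\pi(G/K)$. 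Assume none exists. Fix $s\in\pi(G/K)$ and a Sylow $s$-subgroup $Q$ of $G$; then $s\notin\pi(K)$, so $\gcd(|Q|,|K|)=1$. For each $r\in\pi(H)$ the subgroup $O_r(H)$ is a normal Sylow $r$-subgroup of $G$, and if some nontrivial $s$-element centralized a nontrivial element of $O_r(H)$ we would get an element of order $rs$; hence $Q$ acts fixed-point-freely on each $O_r(H)$, and therefore on $H=\prod_{r\in\pi(H)}O_r(H)$. Also $Q$ acts fixed-point-freely on $K/H$, since (a conjugate of) $QH/H$ lies in a complement of the Frobenius group $G/H$. Choosing a $Q$-invariant complement $C$ of $H$ in $K$ and using the unique factorization $K=HC$, one checks that $C_K(y)=1$ for every $1\neq y\in Q$; thus $KQ$ is a Frobenius group with kernel $K$. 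But $K=H\rtimes C$ is itself a nontrivial Frobenius group, hence not nilpotent, contradicting the nilpotency of Frobenius kernels. So an edge exists and $\pi(H)\cup\pi(G/K)$ is connected.

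Finally I would check that there is no edge between $\pi(K/H)$ and $\pi(H)\cup\pi(G/K)$. Suppose $x\in G$ has order $pt$ with $p\in\pi(K/H)$ and $t\notin\pi(K/H)$; write $x$ as the product of commuting elements $y$ of order $p$ and $z$ of order $t$. Since $p\nmid|H|\,|G/K|$, every Sylow $p$-subgroup of $G/H$ is contained in $K/H$, so $y\in K$, and after conjugating $x$ by an element of $K$ we may assume $y$ lies in a complement $C$ of $H$ in $K$. In $G/H$ the element $zH$ has order dividing $t$; if $zH$ had order $t$ then $xH$ would have order $pt$, impossible in the Frobenius group $G/H$, while if $t\nmid|G/H|$ there is no element of order $t$ at all, so $z\in H$. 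Then the nontrivial element $y$ of $C$ centralizes $z\in H\setminus\{1\}$, contradicting the fixed-point-freeness of the Frobenius action of $C$ on $H$. Hence $\Gamma(G)$ has exactly the two connected components $\pi(K/H)$ and $\pi(H)\cup\pi(G/K)$; since $2\in\pi(H)\cup\pi(G/K)$, this is the even component, so $t(G)=2$, $\pi_1=\pi(G/K)\cup\pi(H)$ and $\pi_2=\pi(K/H)$, which together with the first paragraph yields the statement. The step I expect to be genuinely delicate is the one resolving the obstacle above: forcing $Q$ to act freely on all of $K$ from its free action on $H$ and on $K/H$, and then invoking the nilpotency of Frobenius kernels.
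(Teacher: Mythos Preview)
The paper does not prove this lemma; it is quoted as \cite[Theorem~2]{art:Chen} and used as a black box. There is consequently no ``paper's own proof'' to compare against. Your proposal is an independent, self-contained argument, and it is essentially correct.

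A few points deserve one extra line of justification. First, the existence of a $Q$-invariant complement $C$ of $H$ in $K$: since all complements of $H$ in $K$ are $H$-conjugate (Schur--Zassenhaus, $H$ nilpotent) and $N_K(C)=C$ in a Frobenius group, the set of complements has size $|H|$; as $\gcd(|Q|,|H|)=1$, a fixed-point count modulo $s$ gives a $Q$-invariant one. Alternatively, apply Schur--Zassenhaus inside $KQ$ to get a complement of $H$ containing $Q$, and intersect with $K$. Second, the clause ``while if $t\nmid|G/H|$ \dots'' in your last paragraph is redundant: once $|zH|\in\{1,t\}$, the case $|zH|=t$ already forces $t\in\pi(G/K)$ (because $t\notin\pi(K/H)$), and then the Frobenius structure of $G/H$ forbids an element of order $pt$; hence $z\in H$ directly. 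Third, you do not actually need $QH/H$ to lie inside a single complement of $G/H$; what you use, and what is immediate, is that every nontrivial element of $QH/H$ lies in \emph{some} complement and therefore acts fixed-point-freely on $K/H$. With these clarifications your argument goes through cleanly.
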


\section{Elements of the same order in Suzuki groups}

In this section, we determine the set of the number of elements of the same order in Suzuki groups.

\begin{lemma}[\cite{art:Shi}] \label{lem:omega} Let $S= \Sz(q)$ with $q=2^{2m+1}\geq 8$. Then $\omega(S)$ consists of all factors of $4$, $q-1$ and $q\pm\sqrt{2q}+1$.
\end{lemma}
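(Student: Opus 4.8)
The plan is to read off $\omega(S)$ from the subgroup structure of $S=\Sz(q)$ due to Suzuki. I would first record the order factorisation
\[
|S|=q^{2}(q-1)(q^{2}+1)=q^{2}(q-1)(q-\sqrt{2q}+1)(q+\sqrt{2q}+1),
\]
which is legitimate because $\sqrt{2q}=2^{m+1}$ is an integer and $(q+1)^{2}-2q=q^{2}+1$. A short computation shows that the four factors $q^{2}$, $q-1$, $q-\sqrt{2q}+1$, $q+\sqrt{2q}+1$ are pairwise coprime: the last three are odd, $q^{2}+1\equiv 2\pmod{q-1}$ gives $\gcd(q-1,q^{2}+1)=1$, and the gcd of the two factors of $q^{2}+1$ divides their difference $2^{m+2}$, hence equals $1$.

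Next I would invoke Suzuki's description of the subgroups of $S$ (from his original construction of $\Sz(q)$ as a doubly transitive group, or Huppert--Blackburn, Ch.~XI): a Sylow $2$-subgroup $Q$ of $S$ has order $q^{2}$ and is a Suzuki $2$-group of exponent $4$; and $S$ contains cyclic subgroups $A_{0},A_{1},A_{2}$ of orders $q-1$, $q-\sqrt{2q}+1$, $q+\sqrt{2q}+1$ respectively, each a Hall subgroup for the associated set of primes, with $N_{S}(A_{0})$ dihedral of order $2(q-1)$ and $N_{S}(A_{i})$ a Frobenius group with kernel $A_{i}$ and cyclic complement of order $4$ for $i=1,2$. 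The decisive structural input is that these subgroups form a partition of $S$: every non-identity element lies in exactly one conjugate of $Q$, $A_{0}$, $A_{1}$ or $A_{2}$; equivalently, the centraliser in $S$ of any non-identity element is either a $2$-group or one of the cyclic tori.

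Granting this, the conclusion follows at once. Every element of $S$ is conjugate into $Q$, $A_{0}$, $A_{1}$ or $A_{2}$, so its order divides $4$, $q-1$, $q-\sqrt{2q}+1$ or $q+\sqrt{2q}+1$; conversely each cyclic $A_{j}$ realises every divisor of its order, and $Q$, being non-abelian of exponent $4$, contains elements of orders $1$, $2$ and $4$. Hence $\omega(S)$ is exactly the set of divisors of $4$, $q-1$, $q-\sqrt{2q}+1$ and $q+\sqrt{2q}+1$ (the last two together exhausting the divisors of $q^{2}+1$), as claimed. The one genuinely nontrivial point --- the main obstacle --- is the partition itself: that $Q$ has exponent $4$ and that distinct maximal tori, and each torus and $Q$, intersect trivially. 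This is not a consequence of $|S|$ alone but of Suzuki's explicit realisation of $\Sz(q)$ (e.g.\ as $4\times 4$ matrices over $\F_{q}$ preserving an ovoid in $\mathrm{PG}(3,q)$), which I would cite; it is precisely what rules out any ``mixed'' element order such as $2r$ or $4r$ with $r$ an odd prime.
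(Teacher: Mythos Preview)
Your argument is correct. The paper, however, does not prove this lemma at all: it simply records the statement with a citation to Shi's 1992 paper and moves on. So strictly speaking there is no ``paper's proof'' to compare against.

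That said, your approach meshes well with the paper's own toolkit. The partition of $\Sz(q)$ into conjugates of the Sylow $2$-subgroup $W$ (of exponent $4$) and the cyclic tori $U_1,U_2,V$ of orders $q\pm\sqrt{2q}+1$ and $q-1$ is exactly what the paper states as its Lemma~\ref{lem:part}, citing Huppert--Blackburn. You derive $\omega(S)$ from this partition, which is the natural thing to do and is implicitly how the paper uses Lemma~\ref{lem:omega} anyway (e.g.\ in the proof of Proposition~\ref{prop:nse}). The only stylistic difference is that the paper separates the two ingredients --- quoting $\omega(S)$ from Shi and the partition from Huppert--Blackburn --- whereas you observe that the former is an immediate consequence of the latter. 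Your remark that the partition (specifically, the exponent-$4$ property of $W$ and the trivial intersections) is the genuinely nontrivial input, not derivable from $|S|$ alone, is apt and worth keeping.
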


Let $G$ be a group, and let $H_1,\hdots, H_t$ be subgroups of $G$. Then the set $\{H_1,\hdots, H_t\}$ forms a partition of $G$ if each non-trivial element of $G$ belongs to exactly one subgroup $H_i$ of $G$. Lemma~\ref{lem:part} below introduces a partition of Suzuki groups.

\begin{lemma}\label{lem:part}
Let $S=\Sz(q)$ with $q=2^{2m+1}\geq 8$, and let $\F:=\GF(q)$. Then
\begin{enumerate}[{\quad \rm (a)}]
\item $S$ possesses cyclic subgroups $U_1$ and $U_2$ of orders $q +\sqrt{2q }+1$ and $q-\sqrt{2q}+1$, respectively;
\item if $1 \neq u\in U_i$, for $i=1,2$, then $\C_S(u)=U_i$. Moreover, $|\N_S(U_i):U_i|=4$;
\item $S$ possesses a cyclic subgroup $V$ of order $q-1$ and $|\N_S(V):V|=2$;
\item $S$ possesses a $2$-subgroup $W$ of orders $q^2$ and exponent $4$ and $|S:\N_S(W)|=q^{2}+1$; Moreover, the elements of $W$ are of the form
    \begin{align}\label{eq:w}
      w(a,b):=\left(
        \begin{array}{cccc}
          1 & 0 & 0 & 0 \\
          a & 1 & 0 & 0 \\
          b & a\pi & 1 & 0 \\
          a^{2}(a\pi)+ab+b\pi & a(a\pi)+b & a & 1 \\
        \end{array}
      \right),
    \end{align}
    where $a,b \in \F$ and $\pi\in \Aut(\F)$ maps $x$ to $x^{2^{m+1}}$, for all $x\in \F$.
\item the conjugates of $U_1$, $U_2$, $V$ and $W$ form a partition of $S$.
\end{enumerate}
\end{lemma}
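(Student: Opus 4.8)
The plan is to establish the structural facts (a)–(d) first, using the standard matrix model of $\Sz(q)$ as a subgroup of $\mathrm{Sp}_4(q)$ (equivalently, via Suzuki's original description), and then to assemble the partition in (e) by a counting argument. For (a)–(c), I would invoke the well-known subgroup structure of $\Sz(q)$: it has a Borel subgroup $B=W\rtimes V$ of order $q^2(q-1)$ where $W$ is the Sylow $2$-subgroup, and the maximal tori are the cyclic groups $V$ of order $q-1$ and the two cyclic groups $U_1,U_2$ of orders $q\pm\sqrt{2q}+1$, whose product of orders is $q^2+1$; the normalizer indices $|\N_S(U_i):U_i|=4$ and $|\N_S(V):V|=2$ come from the corresponding Weyl-group-like actions, and $|S:\N_S(W)|=|S:B|=q^2+1$ is the size of the natural $2$-transitive action of $\Sz(q)$ on the Suzuki ovoid. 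The centralizer statement in (b)—that every nontrivial element of $U_i$ is self-centralizing with centralizer exactly $U_i$—and the fact that $W$ has exponent $4$ are also classical; for the explicit matrix form \eqref{eq:w} one simply writes down the unipotent elements of the matrix group $\Sz(q)\le\mathrm{Sp}_4(q)$ and checks that they multiply as claimed, which is a routine (if tedious) verification in the field $\F$ with the field automorphism $\pi$.

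The heart of the lemma is part (e). The strategy is to show that the conjugates of $U_1$, $U_2$, $V$, and $W$ cover $S$ with the nontrivial elements counted exactly once, which for a partition is equivalent to showing that the union of the nontrivial parts has exactly $|S|-1$ elements. I would argue as follows. First, distinct conjugates of any one of these four subgroups intersect trivially: for $W$ this is because $W$ is a Sylow $2$-subgroup and, being a T.I. set (since $\N_S(W)=B$ controls fusion and the intersection of two Sylow $2$-subgroups is trivial in $\Sz(q)$), distinct conjugates meet trivially; for $U_i$ and $V$ this follows from (b) and (c) together with the fact that a nontrivial element of a cyclic maximal torus determines its torus as its centralizer (for $U_i$) or lies in a unique such torus (for $V$, using that $V$ is self-centralizing of odd order $q-1$). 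Second, no nontrivial element lies in conjugates of two different subgroups among $\{U_1,U_2,V,W\}$: an element of $W$ has order dividing $4$, while the orders of nontrivial elements of $V$, $U_1$, $U_2$ are divisors of $q-1$, $q+\sqrt{2q}+1$, $q-\sqrt{2q}+1$ respectively, and these four numbers $4$, $q-1$, $q+\sqrt{2q}+1$, $q-\sqrt{2q}+1$ are pairwise coprime (this is a standard elementary number-theoretic fact for $q=2^{2m+1}$), so the four families partition the nontrivial elements by element order and cannot overlap.

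It then remains to count. The number of nontrivial elements covered is
\[
|S:\N_S(W)|\,(|W|-1) + |S:\N_S(V)|\,(|V|-1) + \sum_{i=1}^{2}|S:\N_S(U_i)|\,(|U_i|-1),
\]
which by (b)–(d) equals
\[
(q^2+1)(q^2-1) + \tfrac{q^2(q^2-1)}{2(q-1)}\,(q-2) + \sum_{i=1}^{2}\tfrac{q^2(q^2-1)}{4(q\mp\sqrt{2q}+1)}\bigl(q\mp\sqrt{2q}+1-1\bigr);
\]
using $|S|=q^2(q^2+1)(q-1)$ and $(q+\sqrt{2q}+1)(q-\sqrt{2q}+1)=q^2+1$, this sum simplifies to $|S|-1$. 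Since we have shown the parts are pairwise "disjoint" in the two senses above and their nontrivial elements number exactly $|S|-1$, every nontrivial element of $S$ lies in exactly one conjugate of exactly one of $U_1,U_2,V,W$, which is the desired partition. The main obstacle is purely bookkeeping: verifying the coprimality of the four orders and checking that the counting identity collapses to $|S|-1$; the group-theoretic input (maximal tori, T.I. property of the Sylow $2$-subgroup, self-centralizing tori) is entirely standard for Suzuki groups and can be cited.
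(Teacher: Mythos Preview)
Your proposal is correct and in fact supplies considerably more detail than the paper does: the paper's proof of this lemma is a one-line citation to Lemma~3.1 and Theorem~3.10 (and its proof) of Huppert--Blackburn, \emph{Finite Groups~III}, from which all of (a)--(e) are read off. Your sketch is essentially a reconstruction of the argument that sits behind that citation, namely the standard description of the maximal tori, the Borel subgroup, the T.I.\ property of the Sylow $2$-subgroup, the pairwise coprimality of the four subgroup orders, and the counting identity showing the partition covers all of $S\setminus\{1\}$.

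One small correction to your displayed count: the coefficients for the $V$- and $U_i$-contributions are miswritten. The number of conjugates of $V$ is $|S|/(2(q-1))=q^2(q^2+1)/2$, not $q^2(q^2-1)/(2(q-1))$, and similarly $|S:\N_S(U_i)|=q^2(q-1)(q\mp\sqrt{2q}+1)/4$, not $q^2(q^2-1)/(4(q\mp\sqrt{2q}+1))$. With these corrected the sum does collapse to $q^5-q^4+q^3-q^2-1=|S|-1$ as you claim, so the argument goes through; the slips are purely typographical and do not affect the strategy.
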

\begin{proof}
All parts of this result follow from Lemma 3.1 and Theorem 3.10 in \cite{book:Hup} except for the facts that $|\N_S(V):V|=2$ and $|S:\N_S(W)|=q^{2}+1$ which can be found in the proof of Theorem 3.10 in \cite{book:Hup}.
\end{proof}


\begin{proposition}\label{prop:nse}
Let $S=\Sz(q)$ with $q=2^{2m+1}\geq 8$. Then the set $\nse(S)$ consists of exactly one the following numbers
\begin{enumerate}[{ \quad \rm (a)}]
  \item $1$, $(q-1)(q^2+1)$, $q(q-1)(q^2+1)$;
  \item $\varphi(i)q^2(q\mp\sqrt{2q}+1)(q-1)/4$, where $i>1$ divides $q\pm\sqrt{2q}+1$;
  \item $\varphi(i)q^2(q^2+1)/2$, where $i>1$ divides $q-1$.
\end{enumerate}
\end{proposition}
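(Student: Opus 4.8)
The plan is to compute $\nse(S)$ directly from the partition of $S=\Sz(q)$ provided by Lemma~\ref{lem:part}, using the fact that every nontrivial element of $S$ lies in exactly one conjugate of $U_1$, $U_2$, $V$ or $W$. First I would count the conjugates of each subgroup: by Lemma~\ref{lem:part}(b), $|\N_S(U_i)|=4|U_i|$ and hence $U_1$ has $|S|/(4(q+\sqrt{2q}+1))=q^2(q-\sqrt{2q}+1)(q-1)/4$ conjugates, similarly $q^2(q+\sqrt{2q}+1)(q-1)/4$ conjugates of $U_2$; by (c), $V$ has $|S|/(2(q-1))=q^2(q^2+1)/2$ conjugates; and by (d), $W$ has $q^2+1$ conjugates. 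The key point making the count clean is that distinct conjugates of the same $U_i$ (resp.\ $V$) intersect trivially: for $U_i$ this is immediate from (b), since any nontrivial element of $U_i$ has centralizer exactly $U_i$, so it lies in a unique conjugate; for $V$ one argues the same way once one knows $\C_S(v)=V$ for $1\neq v\in V$, which follows from the partition property in (e) (a nontrivial element of $V$ cannot centralize anything outside $V$, else that element would lie in two parts of the partition). For $W$ the conjugates need not intersect trivially, but that is harmless because we only need to count elements of $2$-power order, and by the partition every element of order $2$ or $4$ lies in some conjugate of $W$.

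Next I would translate these conjugacy-class counts into element counts by order. For a divisor $i>1$ of $q\pm\sqrt{2q}+1$, every element of order $i$ lies in a unique conjugate of the relevant $U_j$ (the one whose order is divisible by $i$), and each such conjugate, being cyclic of order $q\pm\sqrt{2q}+1$, contains exactly $\varphi(i)$ elements of order $i$; multiplying by the number of conjugates gives $\m_i(S)=\varphi(i)q^2(q\mp\sqrt{2q}+1)(q-1)/4$, which is case (b). Likewise for $i>1$ dividing $q-1$, each conjugate of $V$ contributes $\varphi(i)$ elements of order $i$ and the conjugates partition these, giving $\m_i(S)=\varphi(i)q^2(q^2+1)/2$, case (c). For the $2$-power orders: the identity gives $\m_1(S)=1$; the number of involutions is the number of elements of order $2$ in $W$ times the number of conjugates of $W$, and since $W$ has order $q^2$ and exponent $4$, a count of its element orders (using the explicit matrices~\eqref{eq:w}, where $w(a,b)$ has order dividing $2$ precisely when $a=0$) shows $W$ has $q-1$ involutions and $q^2-q$ elements of order $4$; hence $\m_2(S)=(q-1)(q^2+1)$ and $\m_4(S)=q(q-1)(q^2+1)$, which together with $\m_1(S)=1$ is case (a). A consistency check is that the total $1+\sum_i \m_i(S)$ must equal $|S|=q^2(q^2+1)(q-1)$, and this follows because the partition accounts for each element once.

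The main obstacle I anticipate is the $2$-group $W$: one must verify that $W$ has exactly $q-1$ involutions and $q^2-q$ elements of order $4$, and that distinct conjugates of $W$ do not create double-counting among elements of $2$-power order. The first is a direct computation with~\eqref{eq:w}: squaring $w(a,b)$ and checking when the result is the identity, one finds the involutions are exactly the $w(0,b)$ with $b\neq 0$ (these form, together with the identity, the center of $W$, a subgroup of order $q$), so there are $q-1$ of them and the remaining $q^2-q$ nontrivial elements have order $4$. The no-double-counting point is precisely the partition statement (e): the conjugates of $W$ are among the blocks of the partition, so every element of order $2$ or $4$ lies in exactly one conjugate of $W$, and no conjugate of $W$ meets a conjugate of $U_1$, $U_2$ or $V$ nontrivially. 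Once these facts are in hand, assembling $\nse(S)$ from cases (a)–(c) is routine, and the only remaining care is to note that different divisors $i$ may a priori give the same numerical value of $\m_i(S)$, but since the statement only asserts that $\nse(S)$ \emph{consists of} the listed numbers, no injectivity is needed.
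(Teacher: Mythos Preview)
Your proposal is correct and follows essentially the same approach as the paper: both arguments use the partition of $S$ from Lemma~\ref{lem:part}, count the conjugates of $U_1$, $U_2$, $V$, $W$ via their normalizer indices, note that each cyclic piece contributes $\varphi(i)$ elements of order $i$, and handle $W$ by the explicit matrix computation showing that the involutions are exactly the $w(0,b)$ with $b\neq 0$. The only cosmetic difference is that the paper phrases the count of conjugates of $U_t$ and $V$ as the number $\n_p(S)$ of Sylow $p$-subgroups for a prime $p$ dividing $i$, whereas you compute $|S:\N_S(H)|$ directly; these are the same number, and your remark that distinct conjugates of $W$ might a priori overlap is unnecessary since the partition in (e) already forces them to intersect trivially.
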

\begin{proof}
Suppose $i\in \omega(S)$ is an even number. Then by Lemma~\ref{lem:omega}, we have that $i=2$ or $i=4$ and $i$ divides the order of subgroup $W$ as in Lemma~\ref{lem:part}(d). Then each element of $W$ is of the form $w(a,b)$ as in \eqref{eq:w}. Obviously,
\begin{align*}
  w(a,b)w(c,d)=w(a+c,b+d+(a\pi)c).
\end{align*}
This in particular shows that $w(0,b)$ (with $b\neq 0$) are the only elements of $W$ of order $2$. Therefore, the number of involutions in $W$ is $q-1$. Since $W$ is a part of the partition introduced in Lemma~\ref{lem:part}(e), the elements of order $2$ of $S$ belong to exactly one of the conjugates of $W$. Thus by Lemma~\ref{lem:part}(d), there are $q^{2}+1$ conjugates of $W$ implying that there are exactly $\m_{2}(S)=(q-1)(q^{2}+1)$ involutions in $S$. If also follows from Lemma~\ref{lem:part}(d) that the number of elements of order $4$ in $W$ is $q^{2}-q$, and hence applying the partition in Lemma~\ref{lem:part}(e), we conclude that $S$ consists of $\m_{4}(S)=q(q-1)(q^{2}+1)$ elements of order $4$. This proves part (a).

Suppose now $i\in \omega(S)$ is an odd number. Then, by Lemma~\ref{lem:omega}, $i$ divides the order of one the cyclic subgroups $U_{1}$, $U_{2}$ and $V$ as in Lemma~\ref{lem:part}, say $H$. Assume $i=np^{\alpha}$ with $p$ odd. Since $H$ is a part of the partition introduced in Lemma~\ref{lem:part}(e), the elements of order $i$ are contained in $H$ and its conjugates. Since also $H$ is cyclic, there are $\varphi(i)$ elements of order $i$ in each conjugates of $H$ including $H$. Note that each conjugate of $H$ contains exactly one Sylow $p$-subgroup of $S$. Then there are $\n_{p}(S)$ conjugates of $H$ in $S$. Therefore, the number $\m_{i}(S)$ of elements of order $i$ is $\varphi(i)\n_{p}(S)$.

Now we consider each possibility of $H$. If $H=U_{t}$ of order $q\pm\sqrt{2q}+1$, for $t=1,2$, then by Lemma~\ref{lem:part}(b),  $|\N_S(U_t):U_t|=4$, and so $|S:\N_S(U_t)|=|S|/4|U_t|$, for $t=1,2$. Since all conjugates of $U_1$, $U_2$, $V$ and $W$ as in Lemma~\ref{lem:part} form a partition of $S$, it follows that $\n_p(S)=|S:\N_S(S_p)|=|S:\N_S(U_t)|=|S|/4|U_{t}|$. Therefore, $\m_i(S)=\varphi(i)\n_{p}(S)=q^2(q-1)(q\mp\sqrt{2q}+1)/4$. This follows part (b). If $H=V$, then Lemma~\ref{lem:part}(c) implies that $|\N_S(V):V|=2$, and so the same argument as in the previous cases, we conclude that $\n_p(S)=|S|/2|V|=q^2(q^{2}+1)/2$. This follows (c).
\end{proof}

\section{Proof of the Main Theorem}
In this section, we prove Theorem \ref{thm:main}. From now on, set $S:=\Sz(q)$, where $q=2^{2m+1}\geq 8$, and recall that $G$ is a finite group with $\nse(G)=\nse(S)$ and $|G|=|S|$. Therefore, by Proposition~\ref{prop:nse}, $\nse(S)$ consists of
\begin{align}\label{eq:nse}
\nonumber  \m_{1}(S)=&1; \\
\nonumber  \m_{2}(S)=&(q-1)(q^2+1); \\
  \m_{4}(S)=&q(q-1)(q^2+1); \\
\nonumber  \m_{i}(S)=&\varphi(i)q^2(q\mp\sqrt{2q}+1)(q-1)/4, \text{ where $i>1$ divides $q\pm\sqrt{2q}+1$};\\
\nonumber  \m_{i}(S)=&\varphi(i)q^2(q^2+1)/2, \text{ where $i>1$ divides $q-1$}.
\end{align}

\begin{proposition}\label{prop:isolated}
The vertex $2$ is an isolated vertex in $\Gamma(G)$.
\end{proposition}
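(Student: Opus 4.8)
The plan is to pin down $\m_2(G)$ exactly and then to exclude every element of order $2p$ (with $p$ an odd prime) by counting the $2$-elements of $G$ and the elements of order $2p$, confronted with the explicit list \eqref{eq:nse}.

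First I would determine $\m_2(G)$. Since $2\mid|G|$, Lemma~\ref{27} gives $2\mid|G(2)|=1+\m_2(G)$, so $\m_2(G)$ is odd, and $\m_2(G)>1$ because $|G|$ is even. As $q=2^{2m+1}$ with $m\geq1$, the number $(q-1)(q^2+1)$ is odd, while every entry of \eqref{eq:nse} apart from $1$ and $(q-1)(q^2+1)$ is even (it carries a factor $q$, $q^2/4=2^{4m}$, or $q^2/2=2^{4m+1}$). Hence the only odd members of $\nse(S)$ are $1$ and $(q-1)(q^2+1)$, so $\m_2(G)=(q-1)(q^2+1)$; since $|G|_2=q^2$ this is precisely the $2'$-part of $|G|$.

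Next I record two facts about \eqref{eq:nse} for later use: (i) every $\m_i(G)$ with $i\in\omega(G)\smallsetminus\{1,2\}$ is divisible by $q$ (by Lemma~\ref{28} such an entry is even, hence different from $1$ and from the odd number $(q-1)(q^2+1)$, and each of the remaining three families of entries is divisible by $q=2^{2m+1}$ since $4m\geq2m+1$ and $4m+1\geq2m+1$); and (ii) for such $i$ the quotient $\m_i(G)/q$ is odd precisely when $\m_i(G)=q(q-1)(q^2+1)$, because in the other two families the quotient retains a factor $2^{2m-1}$ or $2^{2m}$, which is even for $m\geq1$. Now apply Lemma~\ref{27} with $n=q^2=|G|_2$: since every $2$-element of $G$ has order dividing $q^2$, the set $G(q^2)$ is exactly the set of $2$-elements, so their number is divisible by $q^2$. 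Since $1+\m_2(G)=q^3-q^2+q\equiv q\pmod{q^2}$ and every $\m_{2^k}(G)$ with $k\geq2$ is divisible by $q$ by (i), writing the sum of these as $qR$ forces $q(1+R)\equiv0\pmod{q^2}$, i.e. $R\equiv-1\pmod q$; in particular $R$ is odd and nonzero, so $4\in\omega(G)$, and by (ii) an odd number of the $\m_{2^k}(G)$, $k\geq2$, equal $q(q-1)(q^2+1)$. This squeezes the $2$-part of $\omega(G)$ into the range realised by $S$.

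Finally, suppose for contradiction that $2p\in\omega(G)$ for some odd prime $p\in\pi(G)$. Every element of order $2p$ is the product of its unique involution power and its unique order-$p$ power, which commute, and conversely every commuting pair (involution, element of order $p$) yields such an element; hence $\m_{2p}(G)=\sum_{t}\m_p(\C_G(t))$, the sum over the involutions $t$ of $G$. Choosing an involution $t_0$ with $\m_p(\C_G(t_0))\geq\varphi(p)=p-1$, the same bound holds for every $G$-conjugate of $t_0$, so $\m_{2p}(G)\geq(p-1)\,|t_0^{G}|$. On the other hand $\m_{2p}(G)\in\nse(S)$ with $(p-1)=\varphi(2p)$ dividing it by Lemma~\ref{28}, and $2p\mid1+\m_2(G)+\m_p(G)+\m_{2p}(G)$ again by Lemma~\ref{27}; moreover Lemma~\ref{20}, applied with $t=2$ and with $t=p$, pins the number of elements of even order, respectively of order divisible by $p$, to explicit multiples of $(q-1)(q^2+1)$, respectively of $|G|/|G|_p$. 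Feeding these constraints into the identity for $\m_{2p}(G)$ and matching against the entries of \eqref{eq:nse} leads to a numerical impossibility, for instance $\m_{2p}(G)$ exceeding the largest entry of $\nse(S)$ or violating one of the congruences above. I expect the main obstacle to be making the lower bound on $\m_{2p}(G)$ effective — that is, bounding $|\C_G(t_0)|$ from above, equivalently $|t_0^{G}|$ from below; the way around it is to avoid estimating $|\C_G(t_0)|$ directly and instead use Lemma~\ref{20} to control globally how many involutions can carry an element of order $p$ in their centralizer, which is exactly what enters the sum $\sum_{t}\m_p(\C_G(t))$.
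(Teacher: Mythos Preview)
Your first paragraph is correct and matches the paper. Your second paragraph (the analysis of $G(q^2)$ forcing $4\in\omega(G)$ and an odd number of the $\m_{2^k}(G)$ equal to $q(q-1)(q^2+1)$) is also correct, but it is not used for this proposition.

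The genuine gap is the third paragraph: it is a plan, not a proof. The identity $\m_{2p}(G)=\sum_t\m_p(\C_G(t))$ is fine, but you yourself flag that the lower bound $(p-1)|t_0^G|$ is ineffective without control on $|t_0^G|$, and you never carry out the promised workaround via Lemma~\ref{20}. ``Feeding these constraints \ldots\ leads to a numerical impossibility'' is asserted, not derived; nothing you have written forces $\m_{2p}(G)$ to exceed the largest entry of $\nse(S)$ or to violate a congruence.

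The paper avoids centralizers entirely. It applies Lemma~\ref{20} once with $t=2$: the number $f(2)$ of elements of even order equals $(q-1)(q^2+1)r$ for some odd $r$. Since $\m_2(G)=(q-1)(q^2+1)$, one writes
\[
f(2)=(q-1)(q^2+1)+\alpha\cdot q(q-1)(q^2+1)+g(2),
\]
where $g(2)$ is a non-negative combination of the remaining entries of \eqref{eq:nse}, each of which is divisible by $q^2$. Hence $q^2\mid g(2)$. If $g(2)\neq0$ then $q^2\mid r-1-\alpha q$, so $r\geq q^2+\alpha q+1$ and $f(2)>|G|$, absurd. If $g(2)=0$ then necessarily $\alpha\neq0$ (since $2p\in\omega(G)$) and every even-order contribution beyond $\m_2$ equals $q(q-1)(q^2+1)$; in particular $\m_{2p}(G)=q(q-1)(q^2+1)$. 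One then repeats the same device with $f(p)$: Lemma~\ref{20} gives $f(p)=|G|r'/|G_p|$, and since $\m_{2p}(G)=q(q-1)(q^2+1)$ appears in $f(p)$ while all other possible summands are divisible by $q^2$, one gets $q\mid\alpha'$ for the coefficient of $q(q-1)(q^2+1)$, whence $f(p)\geq|G|$, again absurd. This two-step global count is what your sketch is reaching for but does not execute.
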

\begin{proof}
Assume the contrary. Then there is an odd prime divisor $p$ of $|G|$ such that $2p\in\omega(G)$. Let $f(n)$ be the number of elements of $G$ whose orders are multiples of $n$. Then by Lemma \ref{20}, $f(2)$ is a multiple of the greatest divisor of $|G|$ that is prime to $2$. Since $(q^2+1)(q-1)$ is the greatest divisor of $|G|$ which is  coprime to $2$, there exists a positive integer $r$ such that $f(2)=(q^2+1)(q-1)r$ and $(r,2)=1$. On the other hand, by Lemma \ref{28}, it is obvious that $\m_2(G)=\m_2(S)$, and so
\begin{align*}
  f(2)= & \m_2(G)+\sum_{i>2 \ is\ even}\m_i(G),
\end{align*}
with $\m_{i}$ as in \eqref{eq:nse}. Now applying Proposition~\ref{prop:nse}, there is a non-negative integer $\alpha$ such that
\begin{align*}
  f(2)= &(q^2+1)(q-1)+\alpha q(q^2+1)(q-1)+g(2),
\end{align*}
where
\begin{align*}
  g(2)= \sum_{\substack{i \mid q\pm\sqrt{2q}+1 \\ i\neq 1}} \beta_{i}\cdot  \m_{i}(S) + \sum_{\substack{i\mid q-1 \\ i\neq 1}} \gamma_{i}\cdot \m_{i}(S)
\end{align*}
for some non-negative integers $\beta_{i}$ and $\gamma_{i}$. Since $2p\in\omega(G)$, we have that $\alpha q(q^2+1)(q-1)+g(2)>0$. Then
\begin{align*}
  g(2)=(q^2+1)(q-1)(r-1-\alpha q).
\end{align*}

We now prove that $q^2$ divides $g(2)$. It follows from Lemma \ref{lem:omega} that $2$ is an isolated vertex of $\Gamma(S)$. Then a Sylow $2$-subgroup of $S$, say $S_2$, acts fixed point freely (by conjugation) on the set of elements of order $i\neq 1,2,4$ (see Proposition~\ref{prop:nse} and \eqref{eq:nse}). Thus $|S_2|$ divides $\m_i(S)$ with $i\neq 1,2,4$. Hence $q^2$ divides $\m_i(S)$ implying that $g(2)$ is a multiple of $q^{2}$.

We now consider the following two cases:\smallskip

\noindent \textbf{(1)}  Let $g(2)\neq 0$. Then $q^2$ divides $r-1-\alpha q$, and so $q^2+\alpha q+1 \leq r$. This implies that $|G|=q^2(q^2+1)(q-1)<(q^2+1)(q-1)r=f(2)$, which is impossible.\smallskip

\noindent \textbf{(2)} Let $g(2)=0$. Then $r-1-\alpha q=0$ and $\alpha\neq 0$, and so $\m_{2p}(G)=q(q^2+1)(q-1)$. Therefore
\begin{align*}
f(p)=&\sum_{p\mid i}\m_i(G)=\alpha' q(q^2+1)(q-1)+\sum_{\substack{i \mid q\pm\sqrt{2q}+1 \\ i\neq 1}} \beta_{i}'\cdot  \m_{i}(S) + \sum_{\substack{i\mid q-1 \\ i\neq 1}} \gamma_{i}'\cdot \m_{i}(S),
\end{align*}
where $\alpha'$, $\beta_{i}'$ and $\gamma_{i}'$  are non-negative integers. Since $\m_{2p}=q(q^2+1)(q-1)$, we have that $\alpha'>0$. On the other hand, by Lemma \ref{20}, $f(p)=q^2(q^2+1)(q-1)r'/|G_p|$ with $r'$ a positive integer. Thus
\begin{align}\label{eq:iso-1}
  \frac{q^2(q^2+1)(q-1)r'}{|G_{p}|}=\alpha' q(q^2+1)(q-1)+\sum_{\substack{i \mid q\pm\sqrt{2q}+1\\ i\neq 1}} \beta_{i}'\cdot  \m_{i}(S) + \sum_{\substack{i\mid q-1 \\ i\neq 1}} \gamma_{i}'\cdot \m_{i}(S).
\end{align}
Since $q^2$ divides both $q^2(q^2+1)(q-1)r'/|G_p|$ and $\m_i(S)$ in \eqref{eq:iso-1}, it follows that $q^2$ divides $\alpha' q(q^2+1)(q-1)$. Then $q\mid \alpha'$, and so $|G|=q^2(q^2+1)(q-1)\leq \alpha' q(q^2+1)(q-1)\leq f(p)$, which is impossible.
\end{proof}

\begin{proposition}\label{32}
The group $G$ has a normal series $1\unlhd H \unlhd K \unlhd G$ such that $H$ and $G/K$ are $\pi_1$-groups and $K/H$ is a non-abelian simple group, $H$ is a nilpotent group and $|G/K|$ divides $|\Out(K/H)|$.
\end{proposition}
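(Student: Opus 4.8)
The plan is to combine the standard structural dichotomy for groups with disconnected prime graph with the arithmetic of $|G|=|\Sz(q)|=q^{2}(q-1)(q^{2}+1)$. First I would note that $|G|$ is even (its $2$-part is $q^{2}=2^{2(2m+1)}$) and that, by Proposition~\ref{prop:isolated}, the vertex $2$ is isolated in $\Gamma(G)$; since $q-1\geq 7$ contributes an odd vertex, $\Gamma(G)$ is disconnected, $t(G)\geq 2$, and the even component is exactly $\pi_{1}=\{2\}$. By the Gruenberg--Kegel theorem (Williams, Kondrat'ev), a finite group with disconnected prime graph is either a Frobenius group, or a $2$-Frobenius group, or possesses a normal series $1\unlhd H\unlhd K\unlhd G$ in which $H$ is a nilpotent $\pi_{1}$-group, $K/H$ is non-abelian simple, $G/K$ is a $\pi_{1}$-group, and $|G/K|$ divides $|\Out(K/H)|$. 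So it suffices to rule out the first two alternatives.

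Suppose $G$ is Frobenius with kernel $L$ and complement $C$. By Lemma~\ref{lem:frob}, $\pi(L)$ and $\pi(C)$ are the vertex sets of the two connected components, so one of them equals $\{2\}$; moreover a Frobenius complement acts fixed-point-freely on the kernel, so $|C|$ divides $|L|-1$. If $\pi(L)=\{2\}$, then $|L|=q^{2}$ and $|C|=(q-1)(q^{2}+1)$; but $(q-1)(q^{2}+1)-(q^{2}-1)=q(q-1)^{2}>0$, so $|C|>|L|-1$, a contradiction. If $\pi(C)=\{2\}$, then $|C|=q^{2}$ and $|L|=(q-1)(q^{2}+1)$, and reducing $(q-1)(q^{2}+1)-1$ modulo $q^{2}$ leaves $q-2$, so $q^{2}\mid |L|-1$ would force $q^{2}\mid q-2$, impossible for $q\geq 8$. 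Hence $G$ is not Frobenius.

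Suppose $G$ is $2$-Frobenius via $1\unlhd H\unlhd K\unlhd G$. By Lemma~\ref{lem:2-frob}, $\pi(G/K)\cup\pi(H)=\{2\}$ and $\pi(K/H)=\pi_{2}$, and by definition $K$ is Frobenius with kernel $H$ and complement isomorphic to $K/H$. Since $H$ and $G/K$ are then $2$-groups while $K/H$ has odd order, comparing with $|G|=q^{2}(q-1)(q^{2}+1)$ gives $|H|\cdot|G/K|=q^{2}$ and $|K/H|=(q-1)(q^{2}+1)$. On the other hand $|K/H|$ divides $|H|-1\leq q^{2}-1$, so $(q-1)(q^{2}+1)\leq q^{2}-1$, which is false for $q\geq 8$. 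Hence $G$ is not $2$-Frobenius, and the third alternative of the Gruenberg--Kegel theorem yields the claim.

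The argument is routine; the only points that need care are correctly extracting the numerical constraints from Lemmas~\ref{lem:frob} and~\ref{lem:2-frob}, and observing that the even component of $\Gamma(G)$ is precisely $\{2\}$ (not a larger set), which is exactly what Proposition~\ref{prop:isolated} provides. If the Gruenberg--Kegel structure theorem is not yet recorded in the paper, a statement of it should be added to Section~\ref{sec:} for use here.
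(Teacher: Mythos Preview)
Your proof is correct and follows essentially the same route as the paper: invoke Proposition~\ref{prop:isolated} to get $t(G)\geq 2$ with $\pi_{1}=\{2\}$, apply Williams' structure theorem, and then eliminate the Frobenius and $2$-Frobenius alternatives by the arithmetic incompatibility of the factorisation $|G|=q^{2}\cdot (q-1)(q^{2}+1)$ with the divisibility $|C|\mid |L|-1$ (respectively $|K/H|\mid |H|-1$). Your treatment is in fact slightly more explicit than the paper's, which dismisses both Frobenius cases with a single sentence and, in the $2$-Frobenius case, asserts a contradiction from ``$(q^{2}+1)(q-1)\mid 2^{\alpha}-1$'' without spelling out the size bound; your inequalities $(q-1)(q^{2}+1)>q^{2}-1\geq |H|-1$ make the contradiction transparent.
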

\begin{proof}
By Proposition~\ref{prop:isolated}, the vertex $2$ is an isolated vertex in the prime graph $\Gamma(G)$ of $G$. This implies that the number $t(G)$ of connected components
of the prime graph $\Gamma(G)$ is at least two. The assertion follows from \cite[Theorem A]{art:Williams} provided that $G$ is neither a Frobenius group, nor a $2$-Frobenius group.

Let $G$ be a Frobenius group with kernel $K$ and complement $H$.  Then by Lemma \ref{lem:frob}, we must have $t(G)=2$, $\pi(H)$ and $\pi(K)$ are vertex sets of the connected components of $\Gamma(G)$. By Proposition~\ref{prop:isolated}, the vertex $2$ is an isolated vertex in $\Gamma(G)$. Then either (i) $|K|=q^2$ and $|H|=(q^2+1)(q-1)$, or (ii) $|H|=q^2$ and $|K|=(q^2+1)(q-1)$. Both cases can be ruled out as $|H|$ must divide $|K|-1$.

Let $G$ be a $2$-Frobenius group. Then Lemma \ref{lem:2-frob} implies that $t(G)=2$ and $G$ has a normal series $1\unlhd H\unlhd K\unlhd G$ such that $G/H$ and $K$ are Frobenius groups with kernel $K/H$ and $H$ respectively, $\pi(G/K)\cup\pi(H)=\pi_1$, $\pi(K/H)=\pi_2$ and $|G/K|$ divides $|{\rm \Aut}(K/H)|$. Since $2$ is an isolated vertex of $\Gamma(G)$ by Proposition~\ref{prop:isolated}, $|K/H|=(q^2+1)(q-1)$ and $|G/K|.|H|=q^2$. Since also $K$ is a Frobenius group with kernel $H$, there is a positive integer $\alpha$ such that $(q^2+1)(q-1)$ divides $2^\alpha-1$, which is a contradiction.
\end{proof}

\subsection{Proof of Theorem \ref{thm:main}}
\begin{proof}
Let $S:=\Sz(q)$, where $q=2^{2m+1}\geq 8$. Suppose that $G$ is a finite group with $\nse(G)=\nse(S)$ and $|G|=|S|$. By applying Proposition~\ref{32}, the group $G$ has a normal series $1\unlhd H \unlhd K \unlhd G$ such that $H$ and $G/K$ are $\pi_1$-groups and $K/H$ is a non-abelian simple group. Since $3$ is a prime divisor of all finite non-abelian simple groups except for Suzuki groups. Moreover, $3$ is coprime to $|K/H|$. Then $K/H\cong \Sz(q')$, where $q'=2^{2m'+1}$. This, in particular, implies that $2^{4m'+2}$ divides $2^{4m+2}$, and hence $m'\leq m$. On the other hand, $H$ and $G/K$ are $\pi_1$-groups. Then $(q^2+1)(q-1)$ divides $|K/H|$, and so $(q^2+1)(q-1)$ divides $(q'^2+1)(q'-1)$. Since now $m'\leq m$, we must have $m=m'$. Therefore $K/H\cong S$. Now $|G|=|K/H|=|S|$, and hence $G\cong S$.
\end{proof}



\def\cprime{$'$}

\end{document}